\documentclass[12pt]{article}
\usepackage{amsmath,amsthm,amssymb,tikz}
\usetikzlibrary{decorations.pathreplacing}
\usepackage{mathtools}

\usepackage{color}
\usepackage{fullpage}
\usepackage{float}
 \usepackage{setspace}
\usepackage{epsf}
\usepackage[absolute]{textpos}
 \tikzset{
  mynode/.style={fill,circle,inner sep=2pt,outer sep=0pt}
}
\usepackage{graphicx}
\usepackage[colorlinks=true,linkcolor=webgreen,filecolor=webbrown,citecolor=webgreen]{hyperref}

\definecolor{webgreen}{rgb}{0,.5,0}
\definecolor{webbrown}{rgb}{.6,0,0}
\theoremstyle{plain}
\newtheorem{theorem}{Theorem}

\newtheorem{corollary}{Corollary}

\newtheorem{lemma}{Lemma}

\theoremstyle{definition}
\newtheorem{definition}{Definition}
\theoremstyle{remark}

\numberwithin{equation}{section}

\newcommand{\h}{\Bar{h}}

\newcommand{\field}{\mathbb{F}}
\newcommand{\F}{\mathcal{F}}

\newcommand{\RA}{\mathcal{R}}
\newcommand{\DR}{\mathcal{DR}}

\usepackage{xcolor}

% Set the beginning of a LaTeX document
\title{Finding an Isomorphism between the Riordan Group and a Subgroup of the Double Riordan Group}
\author{Shakuan K. Frankson\\
\small Mathematics Department\\[-0.8ex]
\small Howard University, Washington, DC\\
\small \texttt{shakuan.frankson@bison.howard.edu}\\
\\
\hspace{0cm} \small Mathematics Subject Classifications: 05A05, 05A10}
\date{}
\begin{document}
\maketitle
\begin{abstract}
The Riordan group is a set of infinite lower-triangular matrices defined by two generating functions, $g$ and $f$. The elements of the group are called Riordan arrays, denoted by $(g,f)$, and the $k$th column of a Riordan array is given by the function $gf^k$. The Double Riordan group is defined similarly using three generating functions $g$, $f_1$, and $f_2$, where $g$ is an even function and $f_1$ and $f_2$ are odd functions. This group generalizes the Checkerboard subgroup of the Riordan group, where $g$ is even and $f$ is odd. An open question posed by Davenport, Shapiro, and Woodson \cite{dr} was if there exists an isomorphism between the Riordan group and a subgroup of the Double Riordan group. This question is answered in this article.
\end{abstract}
% Enter your title between curly braces

% Enter your name between curly braces

% Enter your date or \today between curly braces
\pagenumbering{arabic}

\section{Introduction}
Before we discuss the algebraic structure of the Riordan group, we must set out the terminology needed to define its elements. 
%Some references that provide a self-contained introduction to the topics mentioned include %\cite{flajolet,wilf,stanley}.
\begin{definition}
Given a sequence $(a_n)=(a_0,a_1,a_2,a_3,...)$, the formal power series \begin{equation*}
A(z)=a_0+a_1z+a_2z^2+a_3z^3+...=\sum_{k\geq0} a_kz^k 
\end{equation*}
is called the \textbf{ordinary generating function} of a sequence $(a_n)$. The notation $[z^k]A(z)$ refers to the $k$th coefficient of the formal power series, $a_k$. Formally, $[z^k]$ is called the \textbf{coefficient extraction operator} of a formal power series.
\end{definition}

From these definitions, we now look at the set of matrices that Shapiro, Getu, Woan, and Woodson define in \cite{original}.
\begin{definition}
%{(\cite{original})}
Consider infinite matrices $M=(m_{i,j})$ over a field $\field$, for ${i,j\geq0}$. Let $$C_k(z)=\sum_{n=0}^\infty m_{n,k}z^n$$ be the column generating function of the $k$th column of $M$. When each column generating function of $M$ is given by
\begin{equation*}
C_k(z)=g(z)f(z)^k
\end{equation*}
for some formal power series $g(z)$ and $f(z)$, where
\begin{equation*}
g(z)=g_0+g_1z+g_2z^2+g_3z^3+...=\sum_{k=0}^\infty g_kz^k,\ \ g_0\neq0
\end{equation*}
and 
\begin{equation*}
f(z)=f_1z+f_2z^2+f_3z^3+...=\sum_{k=1}^\infty f_kz^k,\ \ f_1\neq0, 
\end{equation*}
the matrix $M$ is called a \textbf{Riordan matrix} (or, equivalently, a \textbf{Riordan array}). The matrix is denoted $M=(g(z),f(z))$ or $(g,f)$, if it does not cause confusion.
\end{definition}

\begin{definition}
The \textbf{(ordinary) Riordan group} $\mathcal{R}$ is a set of infinite lower-triangular matrices that are defined by two (ordinary) generating functions, $g(z)$ and $f(z)$, where $g_0\neq0$ and $f_1\neq0$. The function $f(z)$ is referred to as the \textbf{multiplier function}, and the elements are called Riordan arrays or Riordan matrices.
\end{definition}

%Requiring the functions $g(z)$ and $f(z)$ to be formal power series helps to establish the Fundamental Theorem of Riordan Arrays, the group's multiplication rule, and the inverse of its elements.

To determine the product of two Riordan arrays, we first observe the product of a Riordan array with an infinite column vector associated with some ordinary generating function $A(z)$. The result is another infinite column vector given by the generating function $B(z)$. This important observation is the Fundamental Theorem of Riordan Arrays (FTRA) \cite{original}.
\begin{theorem}
\textbf{(The Fundamental Theorem of Riordan Arrays):} Let $%
A(z)=\sum_{k=0}^{\infty }{a_{k}z^{k}}$ and $B(z)=\sum_{k=0}^{\infty }{%
b_{k}z^{k}}$ and let $A$ and $B$ be the column vectors $A=\left(
a_{0},a_{1},a_{2},\cdots \right) ^{T}$ and $B=\left(
b_{0},b_{1},b_{2},\cdots \right) ^{T}$. Then $(g,f)*A=B$, if and only if $B(z)=g(z)A(f(z))$.
\end{theorem}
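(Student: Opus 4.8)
The plan is to reduce the claimed generating-function identity $B(z) = g(z)A(f(z))$ to a statement about individual matrix entries, and then to verify that this coefficient-level statement is exactly the definition of the matrix–vector product $(g,f)*A$. Since two formal power series coincide precisely when all of their coefficients agree, establishing the coefficient identity for every $n$ will prove both directions of the biconditional at once.

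First I would record the entries of $M=(g,f)$. By the definition of a Riordan matrix, the $k$th column generating function is $C_k(z)=g(z)f(z)^k$, so $m_{n,k}=[z^n]\,g(z)f(z)^k$. The product $(g,f)*A=B$ asserts, by the usual rule for multiplying a matrix by a column vector, that
\begin{equation*}
b_n=\sum_{k\geq 0} m_{n,k}\,a_k=\sum_{k\geq 0} a_k\,[z^n]\,g(z)f(z)^k
\end{equation*}
for every $n\geq 0$. Next I would expand the right-hand side of the proposed identity: using $A(z)=\sum_{k\geq 0}a_kz^k$ and substituting $z\mapsto f(z)$ gives $A(f(z))=\sum_{k\geq 0}a_kf(z)^k$, hence $g(z)A(f(z))=\sum_{k\geq 0}a_k\,g(z)f(z)^k=\sum_{k\geq 0}a_k\,C_k(z)$.

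Applying the coefficient-extraction operator $[z^n]$ and using its linearity, I obtain $[z^n]\,g(z)A(f(z))=\sum_{k\geq 0}a_k\,[z^n]\,C_k(z)$, which is the same expression displayed above for $b_n$. Matching coefficients across all $n$ then yields $B(z)=g(z)A(f(z))$ if and only if $(g,f)*A=B$, since equality of formal power series is equivalent to equality of every coefficient.

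The step I expect to require the most care is justifying that the composition $A(f(z))$ is a well-defined formal power series and that the interchange of the infinite sum with the operator $[z^n]$ is legitimate. This hinges on the hypothesis that $f$ has zero constant term, i.e.\ $f(z)=f_1z+f_2z^2+\cdots$ with $f_1\neq 0$: then $f(z)^k$ has order at least $k$, so only the terms with $k\leq n$ contribute to $[z^n]$, making each coefficient a finite sum. Equivalently, the Riordan array is lower-triangular, so $m_{n,k}=0$ whenever $k>n$ and the sum defining $b_n$ terminates. I would state this finiteness observation explicitly, so that no analytic convergence is invoked and every manipulation is purely formal.
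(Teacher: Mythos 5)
Your proof is correct and follows essentially the same route as the paper's: both identify the $k$th column of $(g,f)$ with $g f^k$ and expand the matrix--vector product as $\sum_{k\geq 0} a_k\, g f^k = g\, A(f)$. The only difference is that you verify this identity coefficient-by-coefficient and explicitly justify that $A(f(z))$ is a well-defined formal power series (using that $f$ has zero constant term, so each coefficient is a finite sum), a point the paper's one-line computation leaves implicit.
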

\begin{proof}
\vspace{-0.1cm}
\begin{eqnarray*}
\left[ g\ \ gf\ \ gf^{2}\ \ gf^3\ \ \cdots \right] 
\begin{bmatrix}
a_{0} \\ 
a_{1} \\ 
a_{2} \\ 
\vdots%
\end{bmatrix}
&=&%
\begin{bmatrix}
b_{0} \\ 
b_{1} \\ 
b_{2} \\ 
\vdots%
\end{bmatrix}
\\
a_{0}g+a_{1}gf+a_{2}gf^{2}+\cdots &=&B\left( z\right) \\
g\left( a_{0}+a_{1}f+a_{2}f^{2}+a_{3}f^{3}+\cdots \right) &=& B(z)\\
g(z)\ast A\left( f(z)\right) &=&B\left( z\right)
\end{eqnarray*}\vspace{0cm}
\end{proof}

Using the FTRA, we extend to the multiplication rule for Riordan arrays.
\begin{theorem}%(Product of Riordan Arrays)
Let $(g,f)$ and $(G,F)$ be Riordan arrays. Then, the product of the matrices is given by
\begin{equation*}
\left( g,f\right)\ast \left( G,F\right) =\left( gG\left( f\right) ,F\left(
f\right) \right) .
\end{equation*}
\end{theorem}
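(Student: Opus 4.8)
The plan is to reduce this matrix identity to a column-by-column application of the Fundamental Theorem of Riordan Arrays. The product $(g,f)\ast(G,F)$ is the matrix whose $k$th column is the result of applying the matrix $(g,f)$ to the $k$th column of $(G,F)$. Since $(G,F)$ is a Riordan array, its $k$th column is precisely the infinite column vector associated with the ordinary generating function $A(z)=G(z)F(z)^k$, so each column of the product is an instance of the situation handled by the FTRA.

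First I would fix $k$ and set $A(z)=G(z)F(z)^k$, viewing it as the input vector $A$ in the FTRA. Applying the theorem with the left factor $(g,f)$ yields that the $k$th column of the product has generating function $g(z)\,A(f(z))$. I would then expand $A(f(z))=G(f(z))\,F(f(z))^k$, using that substitution of a formal power series respects products and powers, so that the $k$th column generating function of the product equals $g(z)\,G(f(z))\,F(f(z))^k$.

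Next I would compare this against the candidate array $\left(gG(f),\,F(f)\right)$. By the definition of a Riordan array, its $k$th column has generating function $\bigl[g(z)G(f(z))\bigr]\bigl[F(f(z))\bigr]^k$, which is exactly the expression just obtained. Since $k$ was arbitrary and the two matrices agree in every column, they are equal, establishing the claimed formula.

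The computation itself is short, so the main point to handle carefully is the formal-power-series composition $A(f)$: because $f$ has zero constant term, the substitution $G(f)F(f)^k$ is a well-defined formal power series and distributes over the product $GF^k$. I would also verify closure, namely that the output is genuinely a Riordan array --- the constant term of $gG(f)$ is $g_0G_0\neq0$ and $F(f)$ has zero constant term with linear coefficient $F_1f_1\neq0$ --- which confirms the product stays inside $\mathcal{R}$ and is what ultimately makes $\mathcal{R}$ a group under this multiplication.
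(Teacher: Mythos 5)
Your proof is correct and follows exactly the route the paper indicates (the paper states the theorem as a consequence of the FTRA but omits the details): apply the FTRA to each column $GF^k$ of the right factor and match the result $gG(f)F(f)^k$ with the $k$th column of $\left(gG(f),F(f)\right)$. The closure check at the end is a worthwhile addition that the paper also leaves implicit.
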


Associativity holds as a result of the matrix multiplication. The inverse of a Riordan array $(g,f)$ is given by
\begin{equation*}
\left( g,f \right) ^{-1}=\left( \frac{1}{g\left( \overline{f}\right) },\overline{f}\right) .
\end{equation*}
To satisfy the group axioms, we establish that the identity of the Riordan group is the infinite identity matrix, which we denote by $\left( 1,z\right).$  \\

Now, consider the case when an array has two multiplier functions. To preserve the group structure, we set further conditions for the generating functions to satisfy.
\begin{definition}\cite{dr}
Let $g(z)\in\F_0$ be an even function and $f_1(z),f_2(z)\in\F_1$ be odd functions. Define the initial column by the generating function $g(z)$ and multiply $f_1(z)$ and $f_2(z)$ alternately to each successive column. Then, the resulting array is called a \textbf{Double Riordan array}, and these arrays are elements of the \textbf{Double Riordan group}. We denote the group by $\mathcal{DR}$ or $2\RA$. Double Riordan matrices are denoted by $(g,f_1,f_2)$. The columns of the array are given by 
\begin{equation*}
\begin{array}{r}
(g,f_1,f_2)=\Big[g\ \ gf_{1}\ \ g(f_{1}f_{2})\ \ gf_{1}(f_1f_{2})\ \ g(f_{1}f_{2})^{2}\ \ \cdots \Big].\ 
\end{array}%
\end{equation*}%
\end{definition}

When the multiplier functions $f_1=f_2$, the Double Riordan array is an aerated Riordan array in the Checkerboard subgroup.

To find the product of two Double Riordan arrays, Shapiro, Woodson, and Davenport establish the Fundamental Theorem of Double Riordan Arrays (FTDRA) in \cite{dr}. When multiplying a $\mathcal{DR}$ array by an infinite column vector $A(z)$, we must consider two cases: when $A(z)$ is an even or odd function.

\begin{theorem}
\textbf{(The Fundamental Theorem of Double Riordan Arrays)} Let $%
g(z)=\sum_{k=0}^{\infty }{g_{2k}z^{2k}}$, $f_{1}(z)=\sum_{k=0}^{\infty }{%
f_{1,2k+1}z^{2k+1}}$, and $f_{2}(z)=\sum_{k=0}^{\infty }{f_{2,2k+1}z^{2k+1}}$%
.

Case 1: If $A(z)=\sum_{k=0}^{\infty }{a_{2k}z^{2k}}$ and $
B(z)=\sum_{k=0}^{\infty }{b_{2k}z^{2k}}$, and  $A=\left( a_{0},0,a_{2},0,\cdots \right) ^{T}$ and $B=\left(
b_{0},0,b_{2},0,\cdots \right) ^{T}$ are column vectors. Then,
\begin{center}
    $(g,f_{1},f_{2})*A$ $=B$ if and only if $B(z)=g(z)A(\sqrt{f_1f_2}).$
\end{center}

Case 2: If $A(z)=\sum_{k\geq0}{a_{2k+1}z^{2k+1}}$ and $%
B(z)=\sum_{k\geq0}{b_{2k+1}z^{2k+1}}$, then
\begin{center}
    $(g,f_{1},f_{2})*A=B$ if and only if $B(z) = g(z)\sqrt{\frac{f_1}{f_2}}A(\sqrt{f_1f_2})$.
\end{center}
\end{theorem}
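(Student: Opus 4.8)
The plan is to mimic the proof of the Fundamental Theorem of Riordan Arrays by computing the matrix--vector product $(g,f_1,f_2)\ast A$ directly as a linear combination of the columns of the Double Riordan array. First I would record the general form of the columns read off from the definition: the even-indexed column $2k$ has generating function $g(f_1f_2)^k$, while the odd-indexed column $2k+1$ has generating function $gf_1(f_1f_2)^k$. Writing $B$ as $\sum_{j\ge 0} a_j\,C_j(z)$, where $C_j$ is the $j$th column, the two cases of the theorem correspond exactly to which entries of $A$ are permitted to be nonzero.

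In Case 1, $A$ is even, so only the coefficients $a_{2k}$ survive and only the even columns contribute. Factoring out the common factor $g$ gives $B = g\sum_{k\ge 0} a_{2k}(f_1f_2)^k$. The point is then to recognize this sum as a composition: since $\bigl(\sqrt{f_1f_2}\bigr)^{2k} = (f_1f_2)^k$, substituting $z \mapsto \sqrt{f_1f_2}$ into $A(z)=\sum_{k\ge0}a_{2k}z^{2k}$ reproduces the sum term by term, yielding $B(z)=g(z)\,A(\sqrt{f_1f_2})$.

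Case 2 proceeds the same way with the odd columns: only the $a_{2k+1}$ survive, and factoring out $gf_1$ gives $B = gf_1\sum_{k\ge0}a_{2k+1}(f_1f_2)^k$. Here the substitution produces an extra factor, because $\bigl(\sqrt{f_1f_2}\bigr)^{2k+1} = (f_1f_2)^k\sqrt{f_1f_2}$, so that $A(\sqrt{f_1f_2}) = \sqrt{f_1f_2}\sum_{k\ge0}a_{2k+1}(f_1f_2)^k$. Solving for the sum and substituting back gives $B = g\,\frac{f_1}{\sqrt{f_1f_2}}\,A(\sqrt{f_1f_2})$, and the final simplification $\frac{f_1}{\sqrt{f_1f_2}}=\sqrt{\frac{f_1}{f_2}}$ (justified by squaring, since both sides share the same leading coefficient) delivers the stated formula.

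The step I expect to require the most care is not the algebra but the justification that each expression is a legitimate formal power series. Since $f_1$ and $f_2$ are odd with nonzero linear coefficient, $f_1f_2$ is even and begins with a nonzero $z^2$ term, so $\sqrt{f_1f_2}$ exists as an odd series beginning with $z$; similarly $f_1/f_2$ is even with nonzero constant term, so $\sqrt{f_1/f_2}$ is a well-defined even series. I would verify these parity and order conditions explicitly so that the substitution $A(\sqrt{f_1f_2})$ and the division by $\sqrt{f_1f_2}$ are meaningful, taking consistent branches of the square roots throughout. Finally, since the matrix--vector product determines $B$ uniquely from $A$, the forward computation in each case is equivalent to the claimed identity, which supplies the ``if and only if.''
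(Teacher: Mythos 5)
Your proposal is correct, and it is the natural extension of the column-expansion argument that the paper itself uses to prove the ordinary Fundamental Theorem of Riordan Arrays: expand $(g,f_1,f_2)\ast A$ as $\sum_j a_j C_j$, note that $C_{2k}=g(f_1f_2)^k$ and $C_{2k+1}=gf_1(f_1f_2)^k$, and resum via the substitution $z\mapsto\sqrt{f_1f_2}$. The paper states this theorem without proof (citing Davenport, Shapiro, and Woodson), so there is no in-paper argument to diverge from; your attention to the existence of $\sqrt{f_1f_2}$ and $\sqrt{f_1/f_2}$ as formal power series is a sensible addition.
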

\noindent Shapiro et al. generalize the FTDRA to the product of two Double Riordan Arrays and get the following theorem.
\begin{theorem} Let $(g,f_1,f_2)$ and $(G,F_1,F_2)$ be Double Riordan arrays. Then, the product of the arrays is given by
\begin{equation*}
    (g,f_{1},f_{2})*(G,F_{1},F_{2})=\left(gG(h),\frac{f_1}{h}F_{1}(h),\frac{f_2}{h}F_{2}(h)\right),
\end{equation*}
where $h=\sqrt{f_{1}f_{2}}$.
\end{theorem}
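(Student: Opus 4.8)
The plan is to mimic the proof of the Riordan multiplication rule by applying the Fundamental Theorem of Double Riordan Arrays (FTDRA) column-by-column to the second factor $(G,F_1,F_2)$. First I would write out the columns of $(G,F_1,F_2)$ explicitly, separating them by parity: setting $H=\sqrt{F_1F_2}$, the even column indexed by $2k$ is the generating function $G\,H^{2k}=G(F_1F_2)^k$, which is an even function, while the odd column indexed by $2k+1$ is $GF_1H^{2k}=GF_1(F_1F_2)^k$, which is an odd function. This parity split is exactly what the two cases of the FTDRA are built to handle, so the whole product is computed by feeding these columns through the first factor $(g,f_1,f_2)$.

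Next, for each even column $A(z)=G(z)(F_1(z)F_2(z))^k$ I would invoke Case 1 of the FTDRA, producing the new column $g(z)\,A(h)=g\,G(h)\,(F_1(h)F_2(h))^k$, where $h=\sqrt{f_1f_2}$. For each odd column $A(z)=G(z)F_1(z)(F_1(z)F_2(z))^k$ I would invoke Case 2, producing $g(z)\sqrt{f_1/f_2}\,A(h)=g\sqrt{f_1/f_2}\,G(h)F_1(h)(F_1(h)F_2(h))^k$. The case $k=0$ recovers the initial two columns $gG(h)$ and $g\sqrt{f_1/f_2}\,G(h)F_1(h)$, which already hints at the values of $g'$ and $g'f_1'$ in the answer.

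The key algebraic simplification is to recognize that $\sqrt{f_1/f_2}=f_1/\sqrt{f_1f_2}=f_1/h$ and that $h^2=f_1f_2$. Using these, I would define the candidate output triple $g'=gG(h)$, $f_1'=\tfrac{f_1}{h}F_1(h)$, and $f_2'=\tfrac{f_2}{h}F_2(h)$, and then check that $f_1'f_2'=\tfrac{f_1f_2}{h^2}F_1(h)F_2(h)=F_1(h)F_2(h)$, so that the new multiplier product $h'=\sqrt{f_1'f_2'}$ satisfies $(h')^{2k}=(F_1(h)F_2(h))^k$. Substituting this back, the computed even column becomes $g'(h')^{2k}$ and the computed odd column becomes $g'f_1'(h')^{2k}$, which are precisely the even and odd columns of the Double Riordan array $(g',f_1',f_2')$. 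This identification of every column establishes the claimed formula.

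I expect the main obstacle to be two bookkeeping points rather than any deep difficulty. First, I must confirm that the output triple is a legitimate Double Riordan array, namely that $g'$ is even and that $f_1',f_2'$ are odd with the correct nonvanishing leading terms; this follows because $f_1f_2$ is even of order two, so $h=\sqrt{f_1f_2}$ is odd, whence $G(h)$ is even, each quotient $f_i/h$ is even, and each $F_i(h)$ is odd. Second, I must justify that each even (respectively odd) column of the second factor is genuinely an even (respectively odd) function, so that the correct case of the FTDRA applies; this is immediate from $G$ being even, $F_1,F_2$ being odd, and $H$ being odd. Once these parity facts are secured, the entire proof reduces to the square-root simplifications $\sqrt{f_1/f_2}=f_1/h$ and $h^2=f_1f_2$ carried out above.
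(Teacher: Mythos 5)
Your argument is correct. The paper itself states this theorem without proof, attributing it to Davenport, Shapiro, and Woodson, but the derivation it alludes to (``Shapiro et al.\ generalize the FTDRA to the product of two Double Riordan Arrays'') is exactly what you carry out: apply Case 1 of the FTDRA to the even columns $G(F_1F_2)^k$ and Case 2 to the odd columns $GF_1(F_1F_2)^k$, then use $h^2=f_1f_2$ and $\sqrt{f_1/f_2}=f_1/h$ to identify the resulting columns with those of $\left(gG(h),\tfrac{f_1}{h}F_1(h),\tfrac{f_2}{h}F_2(h)\right)$; your parity checks on $g'$, $f_1'$, $f_2'$ close the one genuine gap such a column-by-column argument could have.
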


The generating function $h$ is an element of $\F_1$, so there exists a unique compositional inverse that is denoted by $\bar{h}$. Then, for some Double Riordan array $(g,f_1,f_2)$, its inverse is given by
\begin{equation*}
(g,f_1,f_2)^{-1} = \left(\frac{1}{g(\Bar{h})},\ \frac{z\Bar{h}}{f_1(\Bar{h})},\ \frac{z\Bar{h}}{f_2(\Bar{h})}\right).
\end{equation*}
The identity element of the Double Riordan group is also the infinite identity matrix, which is denoted by $(1,z,z)$.

\section{The Isomorphism Problem}
Davenport, Shapiro, and Woodson \cite{dr} state that there exists an isomorphic mapping $\psi:\RA_C\rightarrow\DR$ defined by
\begin{equation*}
    \psi:(g,f)\mapsto(g,f,f),
\end{equation*}
where $\RA_C$ is the Checkerboard subgroup of the Riordan group, $g(z)$ is an even function, and $f(z)$ is an odd function. An open question from \cite{dr} asks if there exists an isomorphism between the Riordan group and a subgroup of the Double Riordan group. We answer this question in the following section.

\subsection{The Monomorphism Theorems} \label{subsection 2.1}
Before we define these monomorphisms, we first establish the following lemma.
\begin{lemma}
The subset 
\begin{equation*}
S=\biggl\{\left(g(z^2),z,\frac{f(z^2)}{z}\right): g(z)\in\F_0\ \text{and}\ f(z)\in\F_1 \biggl\}
\end{equation*}
of the Double Riordan group is a subgroup.
\end{lemma}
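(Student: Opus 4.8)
The plan is to verify the subgroup criteria directly: that the identity of $\DR$ lies in $S$, and that $S$ is closed under both the product and the inverse operations given above. The guiding observation — which makes every computation tractable — is that an element $\left(g(z^2),z,\frac{f(z^2)}{z}\right)$ should behave exactly like the Riordan array $(g,f)$ under the correspondence $g(z)\mapsto g(z^2)$ and $f(z)\mapsto \frac{f(z^2)}{z}$; I expect the $\DR$ operations to collapse onto the ordinary Riordan operations on the underlying pair $(g,f)$. First I would record that each member of $S$ is a genuine Double Riordan array: if $g\in\F_0$ then $g(z^2)$ is even with nonzero constant term, and if $f\in\F_1$ then $f(z^2)$ is even with lowest-degree term a nonzero multiple of $z^2$, so $\frac{f(z^2)}{z}$ is odd with nonzero linear coefficient. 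Taking $g=1$ and $f(z)=z$ shows the identity $(1,z,z)\in S$.

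For closure under products, the key preliminary step is to compute $h=\sqrt{f_1f_2}$ for a typical element of $S$. Since $f_1=z$ and $f_2=\frac{f(z^2)}{z}$, this gives $h=\sqrt{f(z^2)}$, an odd series in $\F_1$ with the crucial property $h^2=f(z^2)$. Applying the product formula to $\left(g(z^2),z,\frac{f(z^2)}{z}\right)$ and $\left(G(z^2),z,\frac{F(z^2)}{z}\right)$, the middle entry becomes $\frac{z}{h}\,h=z$, while the identity $h^2=f(z^2)$ cancels the square roots in the last entry, leaving
\begin{equation*}
\left(g(z^2)\,G(f(z^2)),\; z,\; \frac{F(f(z^2))}{z}\right).
\end{equation*}
Writing $p=g\,G(f)$ and $q=F(f)$ — precisely the components of the Riordan product $(g,f)*(G,F)$ — this is $\left(p(z^2),z,\frac{q(z^2)}{z}\right)$ with $p\in\F_0$ and $q\in\F_1$, hence an element of $S$.

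For closure under inverses I would first determine the compositional inverse of $h=\sqrt{f(z^2)}$. Solving $\sqrt{f(w^2)}=z$ yields $\overline{h}=\sqrt{\overline{f}(z^2)}$, so that $\overline{h}^2=\overline{f}(z^2)$ and $f(\overline{f}(z^2))=z^2$. Feeding $\overline{h}$ into the $\DR$ inverse formula, the middle entry again collapses to $z$, and these identities reduce the first and third entries to $\frac{1}{g(\overline{f}(z^2))}$ and $\frac{\overline{f}(z^2)}{z}$, which match $\left(\frac{1}{g(\overline{f})},\overline{f}\right)$, the Riordan inverse of $(g,f)$. Thus the inverse also lies in $S$, completing the subgroup test.

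The main obstacle I anticipate is the careful bookkeeping of the formal square roots $\sqrt{f(z^2)}$ and $\sqrt{\overline{f}(z^2)}$: one must confirm each is a legitimate odd power series in $\F_1$ (factoring $f(z^2)=z^2\left(c_0+c_1z^2+\cdots\right)$ with $c_0\neq0$ and extracting $z\sqrt{c_0+c_1z^2+\cdots}$) and that the compositional-inverse relation holds at the level of formal series. Every simplification in the product and inverse hinges on the single identity $h^2=f(z^2)$, so the square roots must be tracked consistently; once this is secured, the whole argument reduces to the familiar Riordan group computations.
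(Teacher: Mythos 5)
Your proposal is correct and follows essentially the same route as the paper: compute $h=\sqrt{f_1f_2}=\sqrt{f(z^2)}$ so the product formula collapses to $\left(g(z^2)G(f(z^2)),z,\frac{F(f(z^2))}{z}\right)$, then use $\overline{h}=\sqrt{\overline{f}(z^2)}$ (hence $\overline{h}^2=\overline{f}(z^2)$) to show the inverse also lands in $S$. Your additional checks — that members of $S$ are genuine Double Riordan arrays and that the identity arises from $g=1$, $f=z$ — are sensible but minor supplements to the paper's argument.
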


\begin{proof}
Suppose $\left(g(z^2),z,\frac{f(z^2)}{z}\right)$ and $\left(G(z^2),z,\frac{F(z^2)}{z}\right)$ are elements of the set $S$. Then, the product
\begin{equation*}
\left(g(z^2),z,\frac{f(z^2)}{z}\right)*\left(G(z^2),z,\frac{F(z^2)}{z}\right)=\left(g(z^2)G\left(f(z^2)\right),z,\frac{F\left(f(z^2)\right)}{z}\right).
\end{equation*}
Thus, the closure property is satisfied. It now suffices to show the inverse of matrices in $S$ also lie in the set. Taking the inverse of some array $\left(g(z^2),z,\frac{f(z^2)}{z}\right)$, we see that
\begin{equation*}
\left(g(z^2),z,\frac{f(z^2)}{z}\right)^{-1}=\left(\frac{1}{g(\h^2)},\frac{z\h}{\h},\frac{z\h}{\frac{f(\h^2)}{\h}}\right)=\left(\frac{1}{g(\h^2)},z,\frac{z\h^2}{f(\h^2)}\right),
\end{equation*}
where $\h=\sqrt{f(z^2)}$. We note that
\begin{equation*}
\h^2=\left(\overline{\sqrt{f(z^2)}}\right)^2=\left({\sqrt{\bar{f}(z^2)}}\right)^2=\Bar{f}(z^2).
\end{equation*}
Thus, 
\begin{equation*}
\left(\frac{1}{g(\h^2)},z,\frac{z\h^2}{f(\h^2)}\right)=\left(\frac{1}{g(\h^2)},z,\frac{z\Bar{f}(z^2)}{f(\Bar{f}(z^2))}\right)=\left(\frac{1}{g(\h^2)},z,\frac{\Bar{f}(z^2)}{z}\right).
\end{equation*}
\end{proof}
\begin{definition}
The subset $S$ of Double Riordan arrays defined in Lemma $2.1$ is called the \textbf{type-$1$ almost Appell subgroup}. This subgroup is also represented by the set 
\begin{equation*}
\{(g,z,f_2): g(z)\ \text{is an even function},\ f_2(z)\ \text{is an odd function} \}. 
\end{equation*}
The type-$1$ almost Appell subgroup is not normal, but it does contain the normal Appell subgroup $\{(g,z,z):g(z)\ \text{is an even function}\}$. Furthermore, the type-$1$ almost Appell subgroup is isomorphic to the Riordan group for a certain map $\phi$, which we define in the following theorem.
\end{definition}
\begin{theorem}(Monomorphism Theorem I)
The type-$1$ almost Appell subgroup of the Double Riordan group is an isomorphic copy of the Riordan group, and the map $\phi:\RA\rightarrow\DR$ defined by
\begin{equation*}
\phi(g,f)=\left(g(z^2),z,\frac{f(z^2)}{z}\right)
\end{equation*}
is a group monomorphism.
\end{theorem}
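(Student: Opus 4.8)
The plan is to establish that $\phi$ is a group monomorphism onto the subgroup $S$ in four quick steps: well-definedness (the image lands in $S\subseteq\DR$), the homomorphism property, injectivity, and surjectivity onto $S$. For well-definedness, given $(g,f)\in\RA$ with $g\in\F_0$ and $f=f_1z+f_2z^2+\cdots\in\F_1$, the series $g(z^2)$ is even with nonzero constant term, the middle entry $z$ is odd, and $f(z^2)/z=f_1z+f_2z^3+\cdots$ is odd with nonzero linear coefficient since $f_1\neq0$. Hence $\phi(g,f)=(g(z^2),z,f(z^2)/z)$ meets the parity and nonvanishing requirements of a Double Riordan array and is exactly of the form defining $S$.

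The heart of the argument is the homomorphism property, and here I would exploit the product formula already computed in the preceding lemma. On one side, the Riordan product gives $(g,f)*(G,F)=(gG(f),F(f))$, so applying $\phi$ and substituting $z^2$ yields
\[
\phi\big((g,f)*(G,F)\big)=\Big(g(z^2)G(f(z^2)),\,z,\,\frac{F(f(z^2))}{z}\Big).
\]
On the other side, the lemma shows
\[
\phi(g,f)*\phi(G,F)=\Big(g(z^2),z,\frac{f(z^2)}{z}\Big)*\Big(G(z^2),z,\frac{F(z^2)}{z}\Big)=\Big(g(z^2)G(f(z^2)),\,z,\,\frac{F(f(z^2))}{z}\Big),
\]
and the two expressions coincide. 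Equivalently, one can apply the Double Riordan product formula directly: with $h=\sqrt{f_1f_2}=\sqrt{z\cdot f(z^2)/z}=\sqrt{f(z^2)}$ we get $h^2=f(z^2)$, so every composition $G(h)$ or $F(h)$ collapses to an argument $f(z^2)$, and the square-root factors cancel precisely because the middle multiplier is pinned at $z$.

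Finally I would settle injectivity and the identification of the image. Computing the kernel, $\phi(g,f)=(1,z,z)$ forces $g(z^2)=1$ and $f(z^2)/z=z$, hence $g=1$ and $f(z^2)=z^2$, i.e. $f=z$; thus $\ker\phi=\{(1,z)\}$ is trivial and $\phi$ is injective. Surjectivity onto $S$ is immediate, since any $(g(z^2),z,f(z^2)/z)\in S$ equals $\phi(g,f)$. Combining these, $\phi$ is an isomorphism of $\RA$ onto $S$, so $S$ is an isomorphic copy of the Riordan group. I expect the only real obstacle to be bookkeeping rather than substance: one must track the substitution $z\mapsto z^2$ carefully so that $g$ and $f$ are genuinely recoverable from $g(z^2)$ and $f(z^2)/z$, and verify that the square roots introduced by the Double Riordan multiplication cancel cleanly---which is exactly what fixing the first multiplier function equal to $z$ ensures.
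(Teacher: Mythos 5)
Your proposal is correct and follows essentially the same route as the paper: both verify the homomorphism property by computing the Double Riordan product with $h=\sqrt{z\cdot f(z^2)/z}=\sqrt{f(z^2)}$ so that $h^2=f(z^2)$ and the square roots cancel, and both establish injectivity (you via the trivial kernel, the paper by directly comparing $\phi(g,f)=\phi(G,F)$, which are equivalent). Your explicit remarks on well-definedness and surjectivity onto $S$ are a welcome tidying of details the paper leaves implicit, but they do not change the substance of the argument.
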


\begin{proof}
Let $(g,f)$ and $(G,F)$ be two Riordan arrays. To show that the map $\phi$ is a homomorphism, we prove that
\begin{equation}\label{homomorph}
\phi(g,f)\ast \phi(G,F)=\phi\left((g,f)*(G,F)\right).
\end{equation}
Note that, for any Riordan array $(g,f)$, the generating functions $g(z)\in\F_0$ and $f(z)\in\F_1$. Thus, the functions $g(z^2)$ and $f(z^2)$ are even. Then, the lefthand side of Equation \eqref{homomorph} is given by
\begin{align*}
\phi(g,f)\ast \phi(G,F)=\left(g(z^2),z,\frac{f(z^2)}{z}\right)*\left(G(z^2),z,\frac{F(z^2)}{z}\right)\\
=\left(g(z^2)G\left(\left(\sqrt{f(z^2)}\right)^2\right),\sqrt{\frac{z^2}{f(z^2)}}\sqrt{f(z^2)},\sqrt{\frac{f(z^2)}{z^2}}\frac{F\left(\left(\sqrt{f(z^2)}\right)^2\right)}{\sqrt{f(z^2)}}\right)\\
=\left(g(z^2)G\left(f(z^2)\right),z,\frac{F\left(f(z^2)\right)}{z}\right)
\end{align*}
The righthand side of Equation \eqref{homomorph} is given by
\begin{align*}
\phi\left((g,f)*(G,F)\right)=\phi(gG(f),F(f))=\left(g(z^2)G(f(z^2)),z,\frac{F(f(z^2))}{z}\right).
\end{align*}
Thus, $\phi$ is a homomorphism. To prove the claim that $\phi$ is a monomorphism, we show that $\phi$ is also one-to-one. Assume that, for some Riordan arrays $(g,f)$ and $(G,F)$, the mapping $\phi(g,f)=\phi(G,F)$. Then,
\begin{equation*}
\left(g(z^2),z,\frac{f(z^2)}{z}\right)=\left(G(z^2),z,\frac{F(z^2)}{z}\right).
\end{equation*}
Clearly, this implies that the functions $g(z^2)=G(z^2)$ and $f(z^2)=F(z^2)$. Therefore, $(g,f)=(G,F)$. Thus, the claim is true.
\end{proof}

It is not difficult to show that the map $\psi:\RA\rightarrow\DR$ defined by $$\psi(g,f)=\left(g(z^2),\frac{f(z^2)}{z},z\right)$$ is also a group monomorphism. We define the set 
\begin{align*}
\biggl\{\left(g(z^2),\frac{f(z^2)}{z},z\right):g(z)\in\F_0, f(z)\in\F_1\biggl\}\\
=\biggl\{\left(g,f_1,z\right):\ \text{$g$ is an even function, $f_1$ is an odd function}\biggl\}
\end{align*}
to be the \textbf{type-2 almost Appell subgroup} of the Double Riordan group. Therefore, we know that at least two copies of the Riordan group are found in the Double Riordan group. Extending this mapping to the $k$-Riordan group, we get the following corollary.
\begin{corollary}
Denoting the $k$-Riordan group by $k\RA$, the map $\phi_k:\RA\rightarrow k\RA$ defined by
\begin{equation*}
\phi_k(g,f)=\left(g(z^k),\underbrace{z,z,...,z}_{\text{$k-1$}},\frac{f(z^k)}{z^{k-1}}\right)
\end{equation*}
is a group monomorphism. Furthermore, the map $\phi_k:\RA\rightarrow k\RA$ is a group monomorphism for any rearrangement of the $k$ multiplier functions. The Riordan group has $k$ known isomorphic copies in the $k$-Riordan group.
\end{corollary}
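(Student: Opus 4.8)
The plan is to imitate the proof of Monomorphism Theorem~I, lifting the argument from the Double Riordan group (the case $k=2$) to the $k$-Riordan group. Everything that made the $k=2$ computation work has a clean analogue: an element of $k\RA$ is a tuple $(g,f_1,\dots,f_k)$ in which $g$ has only exponents $\equiv 0 \pmod k$ and each multiplier $f_i$ has only exponents $\equiv 1 \pmod k$ with nonzero leading coefficient, and the product of two such arrays is the natural generalization of Theorem~1.5, namely $\left(gG(h),\frac{f_1}{h}F_1(h),\dots,\frac{f_k}{h}F_k(h)\right)$ with $h=\sqrt[k]{f_1 f_2\cdots f_k}$. I would take these as the ambient facts about $k\RA$ and verify the three monomorphism requirements in turn: that $\phi_k$ lands in $k\RA$, that it is a homomorphism, and that it is injective.

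First I would check membership. Since $(g,f)\in\RA$, the series $g(z^k)$ has constant term $g_0\neq 0$ and only exponents divisible by $k$; the $k-1$ copies of $z$ are trivially admissible multipliers; and writing $f(z^k)=\sum_{j\geq 1} f_j z^{jk}$ gives $\frac{f(z^k)}{z^{k-1}}=\sum_{j\geq 1} f_j z^{(j-1)k+1}$, a series whose exponents are all $\equiv 1\pmod k$ with leading coefficient $f_1\neq 0$. Hence $\phi_k(g,f)$ satisfies the defining conditions of a $k$-Riordan array.

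The key computation is the evaluation of $h$ for the image arrays, and here a single identity drives everything. Because $k-1$ of the multipliers equal $z$, the product telescopes to $f_1\cdots f_k = z^{k-1}\cdot\frac{f(z^k)}{z^{k-1}}=f(z^k)$, so $h=\sqrt[k]{f(z^k)}$ and therefore $h^k=f(z^k)$. Applying the product rule to $\phi_k(g,f)*\phi_k(G,F)$, the first component becomes $g(z^k)\,G(h^k)=g(z^k)G(f(z^k))$; each of the first $k-1$ multiplier slots returns $\frac{z}{h}\cdot h=z$; and the last slot returns $\frac{f(z^k)/z^{k-1}}{h}\cdot\frac{F(h^k)}{h^{k-1}}=\frac{f(z^k)F(f(z^k))}{z^{k-1}h^k}=\frac{F(f(z^k))}{z^{k-1}}$. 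Thus the product equals $\left(g(z^k)G(f(z^k)),z,\dots,z,\frac{F(f(z^k))}{z^{k-1}}\right)$, which is precisely $\phi_k(gG(f),F(f))=\phi_k\bigl((g,f)*(G,F)\bigr)$. Injectivity is then immediate: $\phi_k(g,f)=\phi_k(G,F)$ forces $g(z^k)=G(z^k)$ and $\frac{f(z^k)}{z^{k-1}}=\frac{F(z^k)}{z^{k-1}}$, hence $g=G$ and $f=F$ after comparing coefficients.

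For the rearrangement claim and the count of $k$ copies, I would note that the telescoping product $f_1\cdots f_k=f(z^k)$ is insensitive to which slot holds the nontrivial multiplier $\frac{f(z^k)}{z^{k-1}}$, so $h=\sqrt[k]{f(z^k)}$ regardless of its placement; since the product formula acts slot-by-slot, fixing that multiplier in a chosen position $j$ for both factors yields a product with the nontrivial multiplier again in position $j$, and the homomorphism and injectivity arguments carry over verbatim. This produces one monomorphism for each of the $k$ choices of $j$, hence $k$ isomorphic copies of $\RA$ inside $k\RA$. The main obstacle I anticipate is formal rather than conceptual: one must justify that $h=\sqrt[k]{f(z^k)}$ is a bona fide formal power series, which it is, since $f(z^k)=z^k u(z)$ with $u(0)=f_1\neq 0$ gives $h=z\,u(z)^{1/k}\in\F_1$, and confirm the composition identities $h^k=f(z^k)$ and $G(z^k)\circ h=G(f(z^k))$ at the level of formal power series. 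Once these are in place, the algebra is the same telescoping already seen in the $k=2$ case.
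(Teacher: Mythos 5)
Your proposal is correct and follows essentially the same route as the paper's proof: both invoke the $k$-fold product rule, observe that $h=\sqrt[k]{f(z^k)}$ so that $h^k=f(z^k)$, and verify the homomorphism identity by the same telescoping computation, with injectivity read off componentwise. The extra details you supply (membership of the image in $k\RA$, the well-definedness of $\sqrt[k]{f(z^k)}$ as a formal power series) are omitted in the paper but only strengthen the argument.
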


\begin{proof}
Let $(g,f)$ and $(G,F)$ be Riordan arrays, and let $h=\sqrt[k]{f(z^k)}$. Note that this implies $h^k=f(z^k)$. Then,
\begin{align*}
\phi_k(g,f)*\phi_k(G,F)=\left(g(z^k),\underbrace{z,z,...,z}_{\text{$k-1$}},\frac{f(z^k)}{z^{k-1}}\right)*\left(G(z^k),\underbrace{z,z,...,z}_{\text{$k-1$}},\frac{F(z^k)}{z^{k-1}}\right)\\
=\left(g(z^k)G(f(z^k)),\underbrace{\frac{z}{h}h,...,\frac{z}{h}h}_{\text{$k-1$}},\frac{\frac{f(z^k)}{z^{k-1}}}{h}\frac{F(f(z^k))}{h^{k-1}}\right)=\left(g(z^k)G(f(z^k)),\underbrace{z,...,z}_{\text{$k-1$}},\frac{f(z^k)}{z^{k-1}}\frac{F(f(z^k))}{h^{k}}\right)\\
= \left(g(z^k)G(f(z^k)),\underbrace{z,...,z}_{\text{$k-1$}},\frac{F(f(z^k))}{z^{k-1}}\right).
\end{align*}
Also,
\begin{align*}
\phi_k\left((g,f)*(G,F)\right)=\phi_k(gG(f),F(f))=\left(g(z^k)G(f(z^k)),\underbrace{z,z,...,z}_{\text{$k-1$}},\frac{F(f(z^k))}{z^{k-1}}\right).
\end{align*}
Thus, $\phi_k$ is group homomorphism. It is straightforward to see the homomorphism is also one-to-one. The image of the map is the set
$$S_k=\biggl\{\Big(g(z^k),\underbrace{z,z,...,z}_{\text{$k-1$}},\frac{f(z^k)}{z^{k-1}}\Big):g(z)\in\F_0,\ \text{and}\ f(z)\in\F_1\biggl\}.$$ Rearranging the multiplier functions of elements in $S_k$ and mapping the Riordan group to that set is also a group monomorphism. 
\end{proof}

For $k\geq3$, are there isomorphic copies of the $(k-1)$-Riordan group in the $k$-Riordan group? If so, which copies can we define? Will the isomorphism hold if we permute the multiplier functions of the arrays in the codomain? We answer this question in the following theorems.
%As we can see, the Double Riordan and $k$-Riordan groups have a rich algebraic structure that remains unknown.
\begin{lemma}
Denote the Triple Riordan ($3$-Riordan) Group by $3\RA$, and let the generating functions $g(z)\in\F_0$ and $f_1(z),f_2(z)\in\F_1$. Then, the subset
\begin{equation*}
\biggl\{\left(g(z^3),z,\frac{f_1(z^3)}{z^2},\frac{f_2(z^3)}{z^2}: g(z)\in\F_0\ \text{and}\ f_1(z),f_2(z)\in\F_1 \right)\biggl\}
\end{equation*}
is a subgroup of the $3$-Riordan group.
\end{lemma}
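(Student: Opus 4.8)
The plan is to verify the three subgroup conditions directly: that the set is nonempty, closed under the group product, and closed under inversion. Throughout I will use the $3$-Riordan product and inverse formulas obtained by specializing the pattern of the $k$-Riordan corollary (together with the Double Riordan product and inverse rules) to $k=3$; namely, for arrays $(g,f_1,f_2,f_3)$ and $(G,F_1,F_2,F_3)$,
\begin{equation*}
(g,f_1,f_2,f_3)*(G,F_1,F_2,F_3)=\left(gG(h),\tfrac{f_1}{h}F_1(h),\tfrac{f_2}{h}F_2(h),\tfrac{f_3}{h}F_3(h)\right),\quad h=\sqrt[3]{f_1f_2f_3},
\end{equation*}
with inverse $\left(\tfrac{1}{g(\h)},\tfrac{z\h}{f_1(\h)},\tfrac{z\h}{f_2(\h)},\tfrac{z\h}{f_3(\h)}\right)$. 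Nonemptiness is immediate, since $g=1$ and $f_1=f_2=z$ return the identity $(1,z,z,z)$.

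The key simplification, which I would isolate first, is that for an element $\left(g(z^3),z,\tfrac{f_1(z^3)}{z^2},\tfrac{f_2(z^3)}{z^2}\right)$ the product of its three multipliers equals $\tfrac{f_1(z^3)f_2(z^3)}{z^3}=p(z^3)$, where $p(w)=\tfrac{f_1(w)f_2(w)}{w}\in\F_1$. Thus $h=\sqrt[3]{p(z^3)}$ obeys $h^3=p(z^3)$, so every occurrence of $h^3$ collapses to a power series in $z^3$. For closure under multiplication I then substitute $h$ into the second array: the first multiplier is $\tfrac{z}{h}\cdot h=z$; the initial entry becomes $g(z^3)\,G(p(z^3))=\big[g\cdot(G\circ p)\big](z^3)$ with $g\cdot(G\circ p)\in\F_0$; and, after cancelling $h^3=p(z^3)=\tfrac{f_1(z^3)f_2(z^3)}{z^3}$, the last two multipliers reduce to $\tfrac{q(z^3)}{z^2}$ and $\tfrac{r(z^3)}{z^2}$ with $q(w)=\tfrac{wF_1(p(w))}{f_2(w)}$ and $r(w)=\tfrac{wF_2(p(w))}{f_1(w)}$. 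A short order count gives $q,r\in\F_1$, so the product again lies in the set.

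For closure under inversion I would first record the cube-root identity $\h=\overline{\sqrt[3]{p(z^3)}}=\sqrt[3]{\overline{p}(z^3)}$, proved exactly as the analogous $\h^2=\overline{f}(z^2)$ step in the type-$1$ almost Appell subgroup lemma by checking that $\sqrt[3]{p\big(\overline{p}(z^3)\big)}=z$; hence $\h^3=\overline{p}(z^3)$. Substituting into the inverse formula yields first multiplier $\tfrac{z\h}{\h}=z$, initial entry $\tfrac{1}{g(\overline{p}(z^3))}$ (a series in $z^3$ with nonzero constant term, so of the required type), and second and third multipliers $\tfrac{z\overline{p}(z^3)}{f_1(\overline{p}(z^3))}=\tfrac{s(z^3)}{z^2}$ and $\tfrac{z\overline{p}(z^3)}{f_2(\overline{p}(z^3))}=\tfrac{t(z^3)}{z^2}$, where $s(w)=\tfrac{w\overline{p}(w)}{f_1(\overline{p}(w))}$ and $t(w)=\tfrac{w\overline{p}(w)}{f_2(\overline{p}(w))}$ lie in $\F_1$. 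Thus the inverse also belongs to the set, and the three conditions together establish that it is a subgroup.

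The step I expect to be the main obstacle is the bookkeeping around the cube root: confirming that $\sqrt[3]{p(z^3)}$ and its compositional inverse are genuine power series in $\F_1$ (which rests on $p\in\F_1$, so that $p(z^3)$ has lowest term of degree $3$ and its cube root has lowest term $z$), and verifying the identity $\h^3=\overline{p}(z^3)$. Once the collapse $h^3=p(z^3)$ is in hand, everything else is the routine cancellation of $h^3$ against the product of multipliers, followed by a residue-mod-$3$ order count confirming that each resulting component is a power series in $z^3$ of the correct class.
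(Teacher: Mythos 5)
Your argument is correct and is exactly the approach the paper intends: the paper's proof of this lemma consists only of the remark that it is "similar to the proof of Lemma 2.1," and your proposal carries out precisely that analogue — closure via the $3$-Riordan product with $h^3=\frac{f_1(z^3)f_2(z^3)}{z^3}$, and closure under inversion via the cube-root identity $\h^3=\overline{p}(z^3)$ mirroring the paper's $\h^2=\overline{f}(z^2)$ step. Your version is in fact more detailed than what the paper records, and the order counts you flag as the main obstacle all check out.
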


\begin{proof}
The proof is similar to the proof of Lemma $2.1$.
\end{proof}

\begin{definition}
The subgroup defined in Lemma $2.2$ is called the \textbf{type-1 almost Appell subgroup} of the Triple Riordan group. Permuting $z$ such that it is the second or third multiplier function listed generates the \textbf{type-2 almost Appell subgroup} and the \textbf{type-3 almost Appell subgroup}, respectively.
\end{definition}

\begin{theorem}(Monomorphism Theorem II)
Let $g(z)\in\F_0$ and $f_1(z),f_2(z)\in\F_1$. Then, the matrix
\begin{equation}\label{dr to 3r}
    \left(g(z^2),\frac{f_1(z^2)}{z},\frac{f_2(z^2)}{z} \right)
\end{equation}
is a Double Riordan array. The map $\chi:\DR\rightarrow3\RA$ defined by 
\begin{equation*}
\chi\left(g(z^2),\frac{f_1(z^2)}{z},\frac{f_2(z^2)}{z} \right)=\left(g(z^3),z,\frac{f_1(z^3)}{z^2},\frac{f_2(z^3)}{z^2}\right)
\end{equation*}
is a group monomorphism. Furthermore, the Double Riordan group is isomorphic to the type-$1$ almost Appell subgroup of the Triple Riordan group. The Double Riordan group is also isomorphic to the type-$2$ and type-$3$ almost Appell subgroups. 
\end{theorem}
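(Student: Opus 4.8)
The plan is to follow the template of Monomorphism Theorem I and the Corollary: first confirm that $\chi$ is well defined on all of $\DR$, then verify the homomorphism identity by computing the product on both sides, and finally establish injectivity together with surjectivity onto the type-1 almost Appell subgroup. For well-definedness I would first check that $\left(g(z^2),\frac{f_1(z^2)}{z},\frac{f_2(z^2)}{z}\right)$ is a genuine $\DR$ array: since $g\in\F_0$ the series $g(z^2)$ is even with nonzero constant term, and since $f_i\in\F_1$ the series $f_i(z^2)$ is even and begins at $z^2$, so $\frac{f_i(z^2)}{z}$ is odd and begins at $z$. Conversely, every even $\gamma\in\F_0$ and odd $\phi_i\in\F_1$ arise this way by reindexing coefficients, so every Double Riordan array is uniquely of this shape and $\chi$ is defined on all of $\DR$.

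For the identity $\chi(A*B)=\chi(A)*\chi(B)$ I would take $A=\left(g(z^2),\frac{f_1(z^2)}{z},\frac{f_2(z^2)}{z}\right)$ and $B=\left(G(z^2),\frac{F_1(z^2)}{z},\frac{F_2(z^2)}{z}\right)$ and compute both sides, the organizing device being the auxiliary series $\psi(w)=\frac{f_1(w)f_2(w)}{w}\in\F_1$. On the left, applying the $\DR$ multiplication rule with $h=\sqrt{\frac{f_1(z^2)}{z}\cdot\frac{f_2(z^2)}{z}}$, the key simplification is that $h^2=\psi(z^2)$ is a function of $z^2$; substituting this collapses the product into the standard form $\left(\tilde g(z^2),\frac{\widetilde{f_1}(z^2)}{z},\frac{\widetilde{f_2}(z^2)}{z}\right)$ with $\tilde g=g\cdot(G\circ\psi)$, $\widetilde{f_1}(w)=\frac{wF_1(\psi(w))}{f_2(w)}$, and $\widetilde{f_2}(w)=\frac{wF_2(\psi(w))}{f_1(w)}$, after which $\chi$ merely raises $z^2$ to $z^3$ and inserts the leading $z$. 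On the right I would invoke the Triple Riordan multiplication rule (the $k=3$ analog of the $\DR$ rule already used in the Corollary), for which $h=\sqrt[3]{z\cdot\frac{f_1(z^3)}{z^2}\cdot\frac{f_2(z^3)}{z^2}}$ satisfies $h^3=\psi(z^3)$; the leading multiplier contributes $\frac{z}{h}\cdot h=z$, and the remaining two multipliers reduce, via the same cancellation driven by $h^3=\psi(z^3)$, to $\frac{\widetilde{f_1}(z^3)}{z^2}$ and $\frac{\widetilde{f_2}(z^3)}{z^2}$ with the identical $\widetilde{f_1},\widetilde{f_2}$. The two sides then agree coefficientwise.

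With the homomorphism in hand, injectivity is immediate: $\chi(A)=\chi(B)$ forces $g(z^3)=G(z^3)$ and $f_i(z^3)=F_i(z^3)$, hence $g=G$ and $f_i=F_i$, so $A=B$. Since $g$ ranges over all of $\F_0$ and $f_1,f_2$ over all of $\F_1$, the image of $\chi$ is exactly the set defining the type-1 almost Appell subgroup of $3\RA$ from Lemma 2.2, so $\chi$ is a surjection onto that subgroup and realizes $\DR$ as an isomorphic copy of it. For the type-2 and type-3 statements I would repeat the argument with the constant multiplier $z$ placed in the third or fourth slot; because the $z$-multiplier always contributes $z$ regardless of position while the other two slots carry $f_1,f_2$ symmetrically, the identical computation yields monomorphisms onto those subgroups.

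The main obstacle I anticipate is purely the root bookkeeping in the middle step: one must verify that the square root $h$ in $\DR$ and the cube root $h$ in $3\RA$ are the correct branches lying in $\F_1$, and that $h^2$ (respectively $h^3$) genuinely collapses to the single series $\psi(z^2)$ (respectively $\psi(z^3)$) in the even (respectively cubic) variable, since it is exactly this collapse that forces the two products to coincide. Everything outside of it is formal manipulation that parallels the already-proved Corollary.
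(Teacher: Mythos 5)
Your proposal is correct and follows essentially the same route as the paper: both sides of the homomorphism identity are computed via the Double and Triple Riordan multiplication rules, with the square root $p(z^2)=\sqrt{f_1(z^2)f_2(z^2)/z^2}$ and cube root $h(z^3)=\sqrt[3]{f_1(z^3)f_2(z^3)/z^3}$ (your $\psi$) collapsing both products to the same array, followed by the same injectivity observation and the same permutation argument for the type-$2$ and type-$3$ subgroups. Your explicit checks of well-definedness and of surjectivity onto the type-$1$ almost Appell subgroup are small completeness additions the paper leaves implicit, but they do not change the argument.
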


\begin{proof}
Select generating functions $g(z),G(z)\in\F_0$ and $f_1(z),f_2(z),F_1(z),F_2(z)\in\F_1$. Then, the matrices $D_1=\left(g(z^2),\frac{f_1(z^2)}{z},\frac{f_2(z^2)}{z} \right)$ and $D_2=\left(G(z^2),\frac{F_1(z^2)}{z},\frac{F_2(z^2)}{z} \right)$ are Double Riordan arrays. 
Also, let $h(z^3)=\sqrt[3]{\frac{f_1(z^3)f_2(z^3)}{z^3}}$ and note that $h^3(z^3)=\frac{{f_1(z^3)f_2(z^3)}}{z^3}$. To show that $\alpha$ is a group homomorphism, we must show that $\chi(D_1)\cdot\chi(D_2)=\chi(D_1*D_2)$. 

The lefthand side of Equation \eqref{dr to 3r} is given by
\begin{align*}
\chi(D_1)\cdot\chi(D_2)=\chi\left(g(z^2),\frac{f_1(z^2)}{z},\frac{f_2(z^2)}{z} \right)\cdot\chi\left(G(z^2),\frac{F_1(z^2)}{z},\frac{F_2(z^2)}{z} \right)\\
=\left(g(z^3)G(h^3(z^3)), \frac{zh(z^3)}{h(z^3)}, \frac{\frac{f_1(z^3)}{z^2}}{h(z^3)} \frac{F_1(h^3(z^3))}{h^2(z^3)}, \frac{\frac{f_2(z^3)}{z^2}}{h(z^3)} \frac{F_2(h^3(z^3))}{h^2(z^3)} \right)\\
=\left(g(z^3)G(h^3(z^3)), z, \frac{\frac{f_1(z^3)}{z^2}}{h^3(z^3)} F_1(h^3(z^3)), \frac{\frac{f_2(z^3)}{z^2}}{h^3(z^3)} F_2(h^3(z^3))\right)\\
=\left(g(z^3)G(h^3(z^3)), z, \frac{f_1(z^3)}{z^2}\frac{z^3}{f_1(z^3)f_2(z^3)} F_1(h^3(z^3)), \frac{f_2(z^3)}{z^2}\frac{z^3}{f_2(z^3)f_2(z^3)} F_2(h^3(z^3))\right) \\
=\left(g(z^3)G(h^3(z^3)), z, \frac{zF_1(h^3(z^3))}{f_2(z^3)} ,\ \frac{zF_2(h^3(z^3))}{f_1(z^3)} \right) 
\end{align*}

We compare this to the righthand side of Equation \eqref{dr to 3r}. Let $p(z^2)=\sqrt{\frac{f_1(z^2)f_2(z^2)}{z^2}}$. Then,
\begin{align*}
\chi(D_1*D_2)=\chi\left(\left(g(z^2),z,\frac{f_1(z^2)}{z},\frac{f_2(z^2)}{z} \right)*\left(G(z^2),z,\frac{F_1(z^2)}{z},\frac{F_2(z^2)}{z} \right)\right)\\
= \chi\left( g(z^2)G(p^2(z^2)),\ \frac{zF_1(p^2(z^2))}{f_2(z^2)},\ \frac{zF_1(p^2(z^2))}{f_1(z^2)} \right)\\
=\chi\left( g(z^2)G(p^2(z^2)),\ \frac{1}{z}\frac{z^2F_1(p^2(z^2))}{f_2(z^2)},\ \frac{1}{z}\frac{z^2F_1(p^2(z^2))}{f_1(z^2)} \right)\\
=\left(g(z^3)G(h^3(z^3)),z,\frac{1}{z^2}\frac{z^3F_1(h^3(z^3))}{f_2(z^3)},\frac{1}{z^2}\frac{z^3F_2(h^3(z^3))}{f_1(z^3)} \right)\\
=\left(g(z^3)G(h^3(z^3)),z,\frac{zF_1(h^3(z^3))}{f_2(z^3)},\frac{zF_2(h^3(z^3))}{f_1(z^3)} \right)
\end{align*}

Thus, $\chi$ is a group homomorphism. It is straightforward to verify that the map is one-to-one. So, the Double Riordan group is isomorphic to the type-$1$ almost Appell subgroup of the Triple Riordan group. A similar argument can be used to show that the Double Riordan group is also isomorphic to the type-$2$ and type-$3$ almost Appell subgroups.
\end{proof}

So far, we have shown that at least two isomorphic copies of the Riordan group are in the Double Riordan group, and at least three isomorphic copies of the Double Riordan group are in the Triple Riordan group. Generalizing these results, we get the theorem below.

\begin{theorem}(Generalized Monomorphism Theorem)
Let $k$ be a nonnegative integer. Then, there are at least $k+1$ copies of the $k$-Riordan group in the $(k+1)$-Riordan group.
\end{theorem}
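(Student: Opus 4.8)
The plan is to produce, for each slot $j\in\{1,2,\dots,k+1\}$, an explicit group monomorphism $\Phi_{j}\colon k\RA\to(k+1)\RA$ whose image is the type-$j$ almost Appell subgroup of $(k+1)\RA$ (the arrays whose $j$th multiplier function is $z$), and then to check that these $k+1$ subgroups are pairwise distinct. This generalizes Monomorphism Theorems I and II, which are exactly the cases $k=1$ and $k=2$ together with their permuted copies. Writing a generic element of $k\RA$ in aerated form as $A=\left(g(z^{k}),\tfrac{f_{1}(z^{k})}{z^{k-1}},\dots,\tfrac{f_{k}(z^{k})}{z^{k-1}}\right)$ with $g\in\F_{0}$ and $f_{1},\dots,f_{k}\in\F_{1}$, I would define $\Phi_{j}(A)$ by re-aerating every factor (replacing $z^{k}$ by $z^{k+1}$ and the divisor $z^{k-1}$ by $z^{k}$) and inserting the multiplier $z$ into slot $j$:
\begin{equation*}
\Phi_{j}(A)=\left(g(z^{k+1}),\ \tfrac{f_{1}(z^{k+1})}{z^{k}},\ \dots,\ \tfrac{f_{j-1}(z^{k+1})}{z^{k}},\ z,\ \tfrac{f_{j}(z^{k+1})}{z^{k}},\ \dots,\ \tfrac{f_{k}(z^{k+1})}{z^{k}}\right).
\end{equation*}

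First I would verify that $\Phi_{j}(A)$ is a legitimate $(k+1)$-Riordan array: $g(z^{k+1})$ is supported on exponents divisible by $k+1$, while $z$ and each $\tfrac{f_{i}(z^{k+1})}{z^{k}}$ are supported on exponents congruent to $1$ modulo $k+1$ and have nonzero leading coefficient, so all the generating functions lie in the correct aerated subspaces of $\F_{0}$ and $\F_{1}$. Injectivity is immediate, just as in Monomorphism Theorem I: if $\Phi_{j}(A)=\Phi_{j}(B)$, then equating the initial functions gives $g(z^{k+1})=G(z^{k+1})$, and equating the multipliers in the $k$ slots other than $j$ gives $f_{i}(z^{k+1})=F_{i}(z^{k+1})$ for all $i$, so $A=B$.

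The substantive step is the homomorphism identity $\Phi_{j}(A)\ast\Phi_{j}(B)=\Phi_{j}(A\ast B)$. To handle it uniformly I would first record the general $k$-Riordan multiplication rule, which extends the Fundamental Theorem of Double Riordan Arrays and the Double Riordan product rule: with $h=\sqrt[k]{f_{1}f_{2}\cdots f_{k}}$,
\begin{equation*}
(g,f_{1},\dots,f_{k})\ast(G,F_{1},\dots,F_{k})=\left(gG(h),\ \tfrac{f_{1}}{h}F_{1}(h),\ \dots,\ \tfrac{f_{k}}{h}F_{k}(h)\right),
\end{equation*}
obtained by the same column-by-column argument used for the double case, applying $(g,f_{1},\dots,f_{k})$ to the columns $qk+r$ of $(G,F_{1},\dots,F_{k})$, which are supported on exponents congruent to $r$ modulo $k$. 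The computation then rests on one bookkeeping identity: if $Q(w)=\tfrac{f_{1}(w)\cdots f_{k}(w)}{w^{k-1}}$, then the $k$-mean of the multipliers of $A$ satisfies $h^{k}=Q(z^{k})$, while the $(k+1)$-mean $\widehat{h}$ of the multipliers of $\Phi_{j}(A)$ satisfies $\widehat{h}^{\,k+1}=Q(z^{k+1})$, so that $h^{k}$ and $\widehat{h}^{\,k+1}$ are re-aerations of the same series $Q$. Granting this, slot $j$ of $\Phi_{j}(A)\ast\Phi_{j}(B)$ collapses to $\tfrac{z}{\widehat{h}}\,\widehat{h}=z$, the initial function on both sides reduces to $g(z^{k+1})G(\widehat{h}^{\,k+1})$, and each remaining slot on both sides reduces to the single expression $\tfrac{f_{i}(z^{k+1})\,F_{i}(\widehat{h}^{\,k+1})\,z^{\,k^{2}-k-1}}{f_{1}(z^{k+1})\cdots f_{k}(z^{k+1})}$, so the two sides agree.

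I expect the main obstacle to be purely computational bookkeeping: tracking the aeration exponents ($z^{k}$ versus $z^{k+1}$, and $z^{k-1}$ versus $z^{k}$) and the partial products $f_{1}\cdots f_{r}$ as the running index $r$ crosses the inserted slot $j$, while confirming that $h^{k}=f_{1}\cdots f_{k}$ makes the denominators telescope exactly as in the double case of Monomorphism Theorem II. Once the re-aeration identity $h^{k}=Q(z^{k})$, $\widehat{h}^{\,k+1}=Q(z^{k+1})$ is in place, every substitution $F_{i}(\widehat{h}^{\,k+1})$ matches the corresponding substitution coming from $A\ast B$, and equality is forced. Finally, the $k+1$ images are pairwise distinct: an array in the image of $\Phi_{j}$ has its $j$th multiplier equal to $z$ while its other multipliers range over all of aerated $\F_{1}$, so for $j\neq j'$ the type-$j$ and type-$j'$ almost Appell subgroups differ. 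This yields $k+1$ genuinely distinct isomorphic copies of $k\RA$ inside $(k+1)\RA$, with the degenerate cases $k=0,1,2$ recovered from the identity embedding and Monomorphism Theorems I and II.
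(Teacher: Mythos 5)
Your proposal is correct and follows essentially the same route as the paper: both define the $k+1$ insertion maps that place the multiplier $z$ in a chosen slot while re-aerating $g$ and the $f_i$ from $z^{k}$ to $z^{k+1}$, and both reduce the homomorphism check to the observation that the $k$th power of the geometric mean in $k\RA$ and the $(k+1)$st power of the mean in $(k+1)\RA$ are aerations of the same series (your $h^{k}=Q(z^{k})$, $\widehat{h}^{\,k+1}=Q(z^{k+1})$ is exactly the paper's $j^{k}(z^{k+1})=h^{k+1}(z^{k+1})$). Your explicit remark that the $k+1$ images are pairwise distinct is a small but worthwhile addition that the paper leaves implicit.
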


\begin{proof}
Let $g(z)\in\F_0$ and $f_1(z),...,f_k(z)\in\F_1$. Define the maps $\chi_i:k\RA\rightarrow(k+1)\RA$ for integers $1\leq i\leq k+1$ such that
\begin{equation*}
\begin{cases}
\chi_1\left(g(z^k),\frac{f_1(z^k)}{z^{k-1}},...,\frac{f_k(z^k)}{z^{k-1}}\right)=\Big(g(z^{k+1}),z,\frac{f_1(z^{k+1})}{z^k},...,\frac{f_k(z^{k+1})}{z^k}\Big)\\
\chi_2\left(g(z^k),\frac{f_1(z^k)}{z^{k-1}},...,\frac{f_k(z^k)}{z^{k-1}}\right)=\Big(g(z^{k+1}),\frac{f_1(z^{k+1})}{z^k},z,...,\frac{f_k(z^{k+1})}{z^k}\Big)\\
\indent {\vdots}\\
\chi_{k+1}\left(g(z^k),\frac{f_1(z^k)}{z^{k-1}},...,\frac{f_k(z^k)}{z^{k-1}}\right)=\Big(g(z^{k+1}),\frac{f_1(z^{k+1})}{z^k},...,\frac{f_k(z^{k+1})}{z^k},z\Big),
\end{cases}
\end{equation*}
where $\chi_i$ fixes $z$ in the $(i+1)^{th}$ position in the list of generating functions of the the $(k+1)$-Riordan array. Theorems $2.1$ and $2.2$ are specific cases of the proposition when $k=1$ and $k=2$, respectively. We show the proof that $\chi_1$ is a group homomorphism and the remaining maps will follow a similar argument. Let $\left(g(z^k),\frac{f_1(z^k)}{z^{k-1}},...,\frac{f_k(z^k)}{z^{k-1}}\right)$ and $\left(G(z^k),\frac{F_1(z^k)}{z^{k-1}},...,\frac{F_k(z^k)}{z^{k-1}}\right)$ be $(k+1)$-Riordan arrays. Then,
%\todo[inline]{\textsf{Verify:} Complete the proof showing that $\chi_1$ is a monomorphism}
\begin{align*}
\chi_1\left(g(z^k),\frac{f_1(z^k)}{z^{k-1}},...,\frac{f_k(z^k)}{z^{k-1}}\right)*\chi_1\left(G(z^k),\frac{F_1(z^k)}{z^{k-1}},...,\frac{F_k(z^k)}{z^{k-1}}\right)\\
=\Big(g(z^{k+1}),z,\frac{f_1(z^{k+1})}{z^k},...,\frac{f_k(z^{k+1})}{z^k}\Big)*\Big(G(z^{k+1}),z,\frac{F_1(z^{k+1})}{z^k},...,\frac{F_k(z^{k+1})}{z^k}\Big)
\end{align*}
Let 
\begin{equation*}
h(z^{k+1})=\sqrt[k+1]{\frac{\prod_{j=1}^{k}f_j(z^{k+1})}{z^{(k+1)(k-1)}}},\ \ \text{which implies that}\ \ h^{k+1}(z^{k+1})=\frac{\prod_{j=1}^{k}f_j(z^{k+1})}{z^{(k+1)(k-1)}}
\end{equation*}
for the $(k+1)$-Riordan array $\Big(g(z^{k+1}),z,\frac{f_1(z^{k+1})}{z^k},...,\frac{f_k(z^{k+1})}{z^k}\Big)$. Then,
\begin{align*}
\Big(g(z^{k+1}),z,\frac{f_1(z^{k+1})}{z^k},...,\frac{f_k(z^{k+1})}{z^k}\Big)*\Big(G(z^{k+1}),z,\frac{F_1(z^{k+1})}{z^k},...,\frac{F_k(z^{k+1})}{z^k}\Big)\\
=\left(g(z^{k+1})G\left(h^{k+1}(z^{k+1})\right),\frac{z}{h(z^{k+1})}h(z^{k+1}),\frac{\frac{f_1(z^{k+1})}{z^k}}{h(z^{k+1})}\frac{F_1(h^{k+1}(z^{k+1}))}{h^k(z^{k+1})},...,\frac{\frac{f_k(z^{k+1}(z^{k+1}))}{z^k}}{h(z^{k+1})}\frac{F_k(h^{k+1}(z^{k+1}))}{h^k(z^{k+1})}\right)\\
=\left(g(z^{k+1})G\left(h^{k+1}(z^{k+1})\right),z,\frac{f_1(z^{k+1})}{z^k}\frac{F_1(h^{k+1}(z^{k+1}))}{h^{k+1}(z^{k+1})},...,\frac{f_k(z^{k+1})}{z^k}\frac{F_k(h^{k+1}(z^{k+1}))}{h^{k+1}(z^{k+1})}\right).
\end{align*}

In comparison, let 
\begin{equation*}
j(z^k)=\sqrt[k]{\frac{\prod_{i=1}^k f_i(z^k)}{z^{k(k-1)}}},\ \ \text{which implies that}\ \ j^k(z^{k+1})=\frac{\prod_{i=1}^k f_i(z^{k+1})}{z^{(k+1)(k-1)}}=h^{k+1}(z^{k+1}).
\end{equation*}
Then,
\begin{align*}
\chi_1\left(\left(g(z^k),\frac{f_1(z^k)}{z^{k-1}},...,\frac{f_k(z^k)}{z^{k-1}}\right)*\left(G(z^k),\frac{F_1(z^k)}{z^{k-1}},...,\frac{F_k(z^k)}{z^{k-1}}\right)\right)\\
=\chi_1\left(g(z^k)G(j^k(z^k)),\frac{\frac{f_1(z^k)}{z^{k-1}}}{j(z^k)}\frac{F_1(j^k(z^k))}{j^{k-1}(z^k)},...,\frac{\frac{f_k(z^k)}{z^{k-1}}}{j(z^k)}\frac{F_k(j^k(z^k))}{j^{k-1}(z^k)}\right)\\
=\chi_1\left(g(z^k)G(j^k(z^k)),\frac{f_1(z^k)}{z^{k-1}}\frac{F_1(j^k(z^k))}{j^{k}(z^k)},...,\frac{f_k(z^k)}{z^{k-1}}\frac{F_k(j^k(z^k))}{j^{k}(z^k)}\right)\\
=\left(g(z^{k+1})G(j^k(z^{k+1})),z,\frac{f_1(z^{k+1})}{z^{k}}\frac{F_1(j^k(z^{k+1}))}{j^{k}(z^{k+1})},...,\frac{f_k(z^{k+1})}{z^{k}}\frac{F_k(j^{k}(z^{k+1}))}{j^{k}(z^{k+1})}\right)\\
=\left(g(z^{k+1})G\left(h^{k+1}(z^{k+1})\right),z,\frac{f_1(z^{k+1})}{z^k}\frac{F_1(h^{k+1}(z^{k+1}))}{h^{k+1}(z^{k+1})},...,\frac{f_k(z^{k+1})}{z^k}\frac{F_k(h^{k+1}(z^{k+1}))}{h^{k+1}(z^{k+1})}\right).
\end{align*}

\end{proof}

\section{Conclusion}
An open question that remains is what other isomorphisms can be defined between the Riordan group and a subgroup of the Double Riordan group. Also, how can we use these maps to find more isomorphisms between the Riordan group and the $k$-Riordan group?


\begin{thebibliography} {60}
\bibitem{book} \label{book} Barry, P., \emph{Riordan Arrays: A Primer}, Logic Press, Raleigh, 2016.
\bibitem{spiral19} \label{spiral19} Branch, D., Davenport, D., Frankson, S., Jones, J., Thorpe, G., A and Z Sequences for Double Riordan Arrays. \emph{Springer Proceedings in Mathematics and Statistics}, Vol. 388 (2022), 33-46.
\bibitem{survey} \label{survey} Davenport, D., Frankson, S., Shapiro, L., and Woodson, L., An Invitation to the Riordan Group. \emph{Enumerative Combinatorics and Applications} Vol. 4.3, (2024) Article S2S1.
\bibitem{dr} \label{dr} Davenport. D., Shapiro, L.,  and, Woodson, L. C., The Double Riordan Array. \emph{The Electronic Journal of Combinatorics} Vol. 18 (2011), 1-16.
\bibitem{seq} \label{seq} He, T. X., Sprugnoli, R., Sequence Characterization of Riordan Arrays. \emph{Discrete Mathematics} 309 (2009), 3962-3974.
\bibitem{original}\label{original} Shapiro, L., Getu, S., Woan, W. J., and Woodson, L., The Riordan group. \emph{Discrete Applied Mathematics} Vol. 34 (1991), 229-239.
\bibitem{textbook} \label{textbook} Shapiro, L., Sprugnoli, R., Barry, P., Cheon, G., He, T., Merlini, D., Wang, W., The Riordan Group and Applications. \emph{Springer Monographs in Mathematics}, Cham, Switzerland (2022).
\end{thebibliography}
\end{document}